\title[Green's function in the Heisenberg Group]{Green's function for certain domains in the Heisenberg Group $\mathbb H_n$}
\def\H{\mathbb H}
\def\C{\mathbb C}
\def\R{\mathbb R}
\def\ben{\begin{eqnarray*}}
\def\een{\end{eqnarray*}}
\newtheorem{thm}{Theorem}[section]
\author[S. Dubey, A. Kumar And M. M. Mishra]{ Shivani Dubey, Ajay Kumar* and Mukund Madhav Mishra}
\address{Department of Mathematics, University of Delhi, Delhi, India}
\thanks{*Corresponding author. Email: akumar@maths.du.ac.in}
\date{}
\begin{document}
\begin{abstract}
We obtain explicit smooth Green's functions for annular domain and infinite strip by using kelvin $R$-transform in the Heisenberg group $\H_n$.
\end{abstract}
\keywords{Heisenberg group; sub-Laplacian; Kelvin R-transform; Green function}
\subjclass[2010]{22E30; 34B27}
\maketitle
\section{Introduction}
A Green's fuction is an integral Kernel that can be used to solve inhomogeneous differential equations with boundary conditions. It has interesting physical significances when the involved differential operator is a Laplacian. For example, for the heat conduction equation, the Green's function is proportional to the temperature caused by a concentrated energy source. The rich geometric structure of Heisenberg group allows us to construct explicit examples of domains that are relevant in Potential theory. On the Heisenberg group we have an analogue of the Laplacian which was first studied by Folland and Stein \cite{FM1}. The study of Green's function on the Heisenberg group became interesting after Folland \cite{F2} found a smooth fundamental solution for this operator. Kor\'{a}nyi first gave a Green's function for circular data for a certain gauge ball \cite{kor3} using the Kelvin transform on $\mathbb{H}_n$ \cite{kor4,GK5}. Annular domain in the Heisenberg group was studied in \cite{KM7}. Green's functions for polyharmonic functions for the domains such as disc, half-space and ring in the complex plane have been studied in \cite{BV8,HT9}. In \cite[p. 386]{CH10}, Courant and Hilbert gave the Green's function for annular domain in the classical case by infinitely many reflection of pole with respect to boundary of ball. In this article, we have generalized the method of Courant and Hilbert to the case of annular domain in the Heisenberg group. The role of repeated reflections here is being played by repeated Kelvin transforms. The same idea easily worked for the case of an infinite strip in the Heisenberg group.\\

The Heisenberg group $\mathbb{H}_n$ is the set of points $[z,t]\in \mathbb{C}^n\times\mathbb{R}$ with the multiplication given by
$$[z,t].[z',t']=[z+z',t+t'+2imz.\bar{z}'],$$ $z,z'\in\C^n,t,t'\in\R.$\\
The basis of the Lie algebra of $\mathbb{H}_n$ is $\{Z_j,\bar{Z}_j,T:1\leq j\leq n\}$ where 
\begin{eqnarray*}
Z_j &=& \partial_{z_j}+i\bar{z}_j\partial_t ;\\
\bar{Z}_j &=& \partial_{\bar{z}_j}-iz_j\partial_t ;\\
T &=& \partial_t.
\end{eqnarray*}
The sublaplacian on $\mathbb{H}_n$ is  given by
$$\Delta_0=2\sum_{j=1}^n(\bar{Z_j}Z_j+Z_j\bar{Z_j}).$$
We shall consider a slightly modified subelliptic operator $L_0=-\frac{1}{4}\Delta_0$.
The natural gauge on $\mathbb{H}_n$ is given by

$$N(z,t)=({|z|}^4+t^2)^\frac{1}{4}.$$
The fundamental solution for $L_o$ on  $\mathbb{H}_n$ with pole at identity is given in \cite{F2} as 
$$g_e(\xi)=g_e([z,t])=a_o(|z|^4+t^2)^{-\frac{n}{2}},$$
where
$$a_o=2^{n-2}\frac{(\Gamma(\frac{n}{2}))^2}{\pi^{n+1}},$$
and $\xi=[z,t]$. The fundamental solution with pole at $\eta$ is given by

$$g_\eta(\xi)=g_e(\xi^{-1}\eta).$$
From \cite{kor6}, for $\eta=[\varsigma,\tau]$ and $\xi=[z,t]$,

$$g_\eta(\xi)=a_o|C(\eta,\xi)-P(\eta,\xi)|^{-n},$$
where

$C(\eta,\xi)=|z|^2+|\varsigma|^2+i(t-\tau)$  and $P(\eta,\xi)=2z.\bar{\varsigma}.$\\
For an integrable function $f$ on $\mathbb{H}_n$, we denote the average of $f$ by

$$\bar{f}([z,t])=\frac{1}{2\pi}\int_0^{2\pi}f([e^{i\theta}z,t])d\theta.$$
As in \cite{kor6},

$$g_\eta(\xi)=a_o|C(\eta,\xi)|^{-n}F\left(\frac{n}{2},\frac{n}{2};n;\frac{|P(\eta,\xi)|^2}{|C(\eta,\xi)|^2}\right),$$
where $F$ is the Gaussian hypergeometric function.\\
\section{GREEN'S FUNCTION FOR ANNULAR DOMAIN}
In this section, $D$ will denote the annulus $\{\xi\in\H_n: 0<R<N(\xi)<1\}$.\\The Kelvin transform on the Heisenberg group has been defined and studied in \cite{kor4}. For any $f$ on $\H_n$ the Kelvin transform of $f$ is defined by 
$$Kf=N^{-2n}f\circ h,$$ where h is the inversion,
$$h([z,t])=\left[\frac{-z}{|z|^2-it},\frac{-t}{|z|^4+t^2}\right],$$ for $[z,t] \in\H_n\setminus\{e\}$. This transform sends a harmonic function on $\H_n\setminus\{e\}$ to a harmonic function. It was shown in \cite{kor3} that for a circular function $f$ on  $\H_n\setminus\{e\}$, we have
$$K(f)(\xi^{-1})=f(\xi),$$ 
for all $\xi\in \H_n\setminus\{e\}$ with $N(\xi)$=1.\\ From \cite[(3.3)]{kor3} we have, for $\eta\neq e\in\H_n$
\begin{eqnarray}
K(g_\eta)=N(\eta)^{-2n}g_{\eta^*},
\end{eqnarray}
where we wrote $\eta^*$ for $h(\eta)$.\\The Kelvin R-transform on the Heisenberg group was defined and studied in \cite{KM7}. For $f$ defined on
$\H_n\setminus\{e\}$, the Kelvin R-transform is defined as
$$K_R(f)=R^{2n}g_ef\circ h_R,$$\\ where $h_R$ is the inversion with respect to the Kor\'{a}nyi ball of radius R, i.e,\\ $\{[z,t]:N(z,t)<R\}$
$$h_R([z,t])=\left[\frac{-R^2z}{|z|^2-it},\frac{-R^4t}{|z|^4+t^2}\right],$$ for [z,t] $\in\H_n\setminus\{e\}$.\\This transform sends a harmonic function on $\H_n\setminus\{e\}$ to a harmonic function. It was shown in \cite{KM7} that for a circular function $f$ on $\H_n\setminus\{e\}$, and $R>0$, we have 
$$K_R(f)(\xi^{-1})=f(\xi),$$ for all $\xi\in\H_n$ with $N(\xi)=R$.\\From \cite[(14)]{KM7} we have, for $\eta\neq e \in\H_n$
\begin{eqnarray}
K_R(g_\eta)=R^{2n} N(\eta)^{-2n}g_{\eta^+},
\end{eqnarray}
where $\eta^+=h_R(\eta).$\\A Green function for annular domain was given in \cite{KM7}, however the function was not continuous. The Green function constructed below is smooth. The construction below is dependent on infinite reflections across $D$ using Kelvin R-transform. Consequently a solution to the Dirichlet boundary value problem on $D$ has been provided.\\ We define functions $H_k(\eta,\xi),M_k(\eta,\xi),U_k(\eta,\xi)$ and $V_k(\eta,\xi)$ inductively. Define
\ben
H_1(\eta,\xi)&=&K(\bar{g_\eta})\circ i,\\
M_1(\eta,\xi)&=&\bar{g_\eta}(\xi),\\
U_1(\eta,\xi)&=&K_R(\bar{g_\eta})\circ i,\\
V_1(\eta,\xi)&=&K(K_R(\bar{g_\eta})\circ i)\circ i.
\een When $H_k(\eta,\xi)$, $M_k(\eta,\xi)$, $U_k(\eta,\xi)$, $V_k(\eta,\xi)$ are defined, define
\ben
H_{k+1}(\eta,\xi)&=&K(K_R(H_k(\eta,\xi))\circ i)\circ i,\\
M_{k+1}(\eta,\xi)&=&K_R(K(M_k(\eta,\xi))\circ i)\circ i,\\
U_{k+1}(\eta,\xi)&=&K_R(K(U_k(\eta,\xi))\circ i)\circ i,\\ 
V_{k+1}(\eta,\xi)&=&K(K_R(V_k(\eta,\xi))\circ i)\circ i,
\een where $i$ denotes inversion in the Heisenberg group i.e, $i[z,t]=[-z,-t]$ for $[z,t]\in \H_n$.\\ We claim that
\begin{eqnarray}
\sum_{k=1}^{\infty}{[M_K(\eta,\xi)-H_k(\eta,\xi)]}+\sum_{k=1}^{\infty}{[V_k(\eta,\xi)-U_k(\eta,\xi)]}
\end{eqnarray}
is absolutely and uniformly convergent.\\ We first show that\\
\begin{eqnarray}
H_k(\eta,\xi)&=&R^{(2k-2)n}(N(\xi))^{-2n}\left|\frac{|z|^2}{|z|^4+t^2}+R^{2(2k-2)}|\varsigma|^2+i\left(\frac{t}{|z|^4+t^2}-R^{2(2k-2)}\tau\right)\right|^{-n}\nonumber\\
&&\times F\left(\frac{n}{2},\frac{n}{2};n;\frac{u_{k,1}}{v_{k,1}}\right),
\end{eqnarray}
where $$\frac{u_{k,1}}{v_{k,1}}=\frac{4R^{2(2k-2)}|z|^2|\varsigma|^2}{1+R^{4(2k-2)}(|\varsigma|^4+\tau^2)(|z|^4+t^2)+2R^{2(2k-2)}(|z|^2|\varsigma|^2-t\tau)}.$$
We prove $(4)$ by induction on $k$.\\ For $k=1$, $ H_1(\eta,\xi)=K(\bar{g_\eta})(-\xi).$\\Assume $(4)$ for $k=l$, we show that the validity of $(4)$ for $k=l+1$.\\ From $(1)$ and $(2)$, we have
\begin{eqnarray*}
H_{l+1}(\eta,\xi)&=&K(K_R(H_l(\eta,\xi))\circ i)(-\xi)\\
&=&g_e(-\xi)K_R(H_l(\eta,\xi))(h\xi)\\
&=&R^{2n}g_e(-\xi)g_e(h\xi)H_l(\eta,\xi)(h_Rh\xi)\\
&=&R^{2n}H_l(\eta,\xi)(R^2z,R^4t)\\
&=&R^{2n}R^{(2l-2)n}(N(\xi))^{-2n}R^{-4n}\left|\frac{R^4|z|^2}{R^8(|z|^4+t^2)}+R^{2(2l-2)}|\varsigma|^2
+i\left(\frac{R^4t}{R^8(|z|^4+t^2)}-R^{2(2l-2)}\tau\right)\right|^{-n} \\
&&\times F\left(\frac{n}{2},\frac{n}{2};n;\frac{a_{l+1}}{b_{l+1}}\right)\\
&=&R^{2ln}(N(\xi))^{-2n}\left|\frac{|z|^2}{|z|^4+t^2}+R^{2(2l)}|\varsigma|^2+i\left(\frac{t}{|z|^4+t^2}-R^{2(2l)}\tau\right)\right|^{-n} F\left(\frac{n}{2},\frac{n}{2};n;\frac{u_{l+1,1}} {v_{l+1,1}}\right),
\end{eqnarray*} 
where
 $$\frac{u_{l+1,1}}{v_{l+1,1}}=\frac{4R^{2(2l)}|z|^2|\varsigma|^2}{1+R^{4(2l)}(|\varsigma|^4+\tau^2)(|z|^4+t^2)+2R^{2(2l)}(|z|^2|\varsigma|^2-t\tau)}.$$\\
Therefore, by Induction $(4)$ follows.\\Similarly, we have\\
$M_k(\eta,\xi)=R^{(2k-2)n}||z|^2+R^{2(2k-2)}|\varsigma|^2+i(t-R^{2(2k-2)}\tau)|^{-n} F\left(\frac{n}{2},\frac{n}{2};n;\frac{u_{k,2}}{v_{k,2}}\right),$\\where\\ $$\frac{u_{k,2}}{v_{k,2}}=\frac{4R^{2(2k-2)}|z|^2|\varsigma|^2}{(|z|^4+t^2)+R^{4(2k-2)}(|\varsigma|^4+\tau^2)+2R^{2(2k-2)}(|z|^2|\varsigma|^2-t\tau)},$$
$U_k(\eta,\xi)=R^{2kn}(N(\xi))^{-2n}\left|\frac{R^{4k}|z|^2}{|z|^4+t^2}+|\varsigma|^2+i\left(\frac{R^{4k}t}{|z|^4+t^2}-\tau\right)\right|^{-n} F\left(\frac{n}{2},\frac{n}{2};n;\frac{u_{k,3}}{v_{k,3}}\right),$\\where\\ $$\frac{u_{k,3}}{v_{k,3}}=\frac{4R^{2(2k)}|z|^2|\varsigma|^2}{R^{2(4k)}+(|\varsigma|^4+\tau^2)(|z|^4+t^2)+2R^{2(4k)}(|z|^2|\varsigma|^2-t\tau)},$$
$V_k(\eta,\xi)=R^{2kn}|R^{4k}|z|^2+|\varsigma|^2+i(R^{4k}t-\tau)|^{-n} F\left(\frac{n}{2},\frac{n}{2};n;\frac{u_{k,4}}{v_{k,4}}\right),$\\where\\ $$\frac{u_{k,4}}{v_{k,4}}=\frac{4R^{2(2k)}|z|^2|\varsigma|^2}{R^{2(4k)}(|z|^4+t^2)+(|\varsigma|^4+\tau^2)+2R^{2(4k)}(|z|^2|\varsigma|^2-t\tau)}.$$\\Now, we will prove that the Gaussian Hypergeometric functions involved in expression of infinite series $(3)$ are uniformly bounded.\\Consider
\ben
&&1+R^{4(2k-2)}(|\varsigma|^4+\tau^2)(|z|^4+t^2)+2R^{2(2k-2)}(|z|^2|\varsigma|^2-t\tau)-4R^{2(2k-2)}|z|^2|\varsigma|^2\\
&&=1+R^{2(2k-2)}[R^{2(2k-2)}(|\varsigma|^4+\tau^2)(|z|^4+t^2)-2|z|^2|\varsigma|^2-2t\tau]\\
&&\geq 1-2R^{2(2k-2)}[|z|^2|\varsigma|^2+t\tau]\\
&&\geq 1-4R^{2(2k-2)}.
\een We can choose $k$ large enough such that $R^{2(2k-2)}<\frac{1}{8}$ so that argument $\frac{u_{k,1}}{v_{k,1}}$ of $F\left(\frac{n}{2},\frac{n}{2};n;\frac{u_{k,1}}{v_{k,1}}\right)$ is bounded away from $1$. Thus $\left\{F\left(\frac{n}{2},\frac{n}{2};n;\frac{u_{k,1}}{v_{k,1}}\right)\right\}_{k=1}^\infty$ is uniformly bounded for $\eta\neq\xi$, say
$$\left|F\left(\frac{n}{2},\frac{n}{2};n;\frac{u_{k,1}}{v_{k,1}}\right)\right|<E_1.$$\\Similarly we have constants $E_2, E_3, E_4$ such that
$$\left|F\left(\frac{n}{2},\frac{n}{2};n;\frac{u_{k, i}}{v_{k,i}}\right)\right|<E_i \;\text{for}\;i=2, 3, 4.$$ We assert that both series in $(3)$ are uniformly and absolutely convergent on compact neighbourhoods of $\xi$.\\We have 
\ben
|M_k(\eta,\xi)|&=&|R^{2kn}|\left||z|^2+R^{4k}|\varsigma|^2+i(t-R^{4k}\tau)\right|^{-n}\left|F\left(\frac{n}{2},\frac{n}{2};n;\frac{u_{k,2}}{v_{k,2}}\right)\right|\\
&\leq&\left[2(1+|z|^2|\varsigma|^2+|t\tau|)\right]^{\frac{-n}{2}}.E_2.|R^{2kn}|.
\een\\Since $R<1$, the series $\sum_{k=1}^\infty{R^{2kn}}$ is convergent and so $\sum_{k=1}^\infty{|M_k(\eta,\xi)|}$ is uniformly convergent on compact neighbourhood of $\xi=[z,t]$.\\Similar estimates show that $\sum_{k=1}^\infty{|H_k(\eta,\xi)|}$ is uniformly convergent on compact neighbourhood of $\xi=[z,t]$.\\Hence $\sum_{k=1}^\infty{[M_k(\eta,\xi)-H_k(\eta,\xi)]}$ is absolutely and uniformly convergent on compact neighbourhoods of $\xi$ (for$\;\eta\neq\xi$).\\We have,\\
\ben
|V_k(\eta,\xi)|&=&|R^{2kn}|\left|R^{4k}|z|^2+|\varsigma|^2+i(R^{4k}t-\tau)\right|^{-n}\left|F\left(\frac{n}{2},\frac{n}{2};n;\frac{u_{k,4}}{v_{k,4}}\right)\right|\\
&\leq&\left[2(1+|z|^2|\varsigma|^2+|t\tau|)\right]^{\frac{-n}{2}}.E_4.|R^{2kn}|.
\een\\Since $R<1$, the series $\sum_{k=1}^\infty{R^{2kn}}$ is convergent and so $\sum_{k=1}^\infty{|V_k(\eta,\xi)|}$ is uniformly convergent on compact neighbourhood of $\xi=[z,t]$.\\Similar estimates show that  $\sum_{k=1}^\infty{|U_k(\eta,\xi)|}$ is uniformly convergent on compact neighbourhood of $\xi=[z,t]$.\\Hence $\sum_{k=1}^\infty{[V_k(\eta,\xi)-U_k(\eta,\xi)]}$ is absolutely and uniformly convergent on compact neighbourhoods of $\xi$ (for$\;\eta\neq\xi$).\\Therefore, for each $\eta$,
\begin{eqnarray}
G(\eta,\xi)=\sum_{k=1}^{\infty}{[M_k(\eta,\xi)-H_k(\eta,\xi)]}+\sum_{k=1}^{\infty}{[V_k(\eta,\xi)-U_k(\eta,\xi)]},
\end{eqnarray} is a well defined function for $\xi\neq\eta$.\\Next, we show that $G(\eta,\xi)$ works as a Green's function when applied to circular functions.\\
\begin{thm}
The function $G(\eta,\xi)$ is a smooth function on $D=\{\xi\in\H_n:0<R<N(\xi)<1\}$ and satisfies the following.\\(i) $L_0G(\eta,\xi)=\delta_\eta$.\\(ii) Limits of the function $G(\eta,\xi)$ vanishes at the boundaries of annular domain i.e, at $N(\xi)=1$ and $N(\xi)=R$.
\end{thm}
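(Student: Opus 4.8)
The plan is to read off, directly from the defining recursions, the reflection symmetry that makes the series telescope on the two spheres. First I would establish, by a one-line induction on $k$, the structural identities
\ben
H_k=K(M_k)\circ i,\qquad M_{k+1}=K_R(H_k)\circ i,\\
V_k=K(U_k)\circ i,\qquad U_{k+1}=K_R(V_k)\circ i,
\een
valid for all $k\ge 1$, together with $U_1=K_R(M_1)\circ i$. The base case $k=1$ is the definition, and the inductive step is immediate; for instance $M_{k+1}=K_R\big(K(M_k)\circ i\big)\circ i=K_R(H_k)\circ i$. I would also note that each of $M_k,H_k,U_k,V_k$ is a circular function: $M_1=\bar g_\eta$ is circular by construction, and since $h$, $h_R$ and $i$ commute with the rotation $z\mapsto e^{i\theta}z$ while $N$ and $g_e$ are rotation invariant, the operators $K$, $K_R$ and $i$ preserve circularity.

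For part (i) I would argue that $M_1$ is the only term carrying a singularity inside $D$. Using the pole-tracking formulas $(1)$ and $(2)$, the pole of each reflected term $H_k,U_k,V_k$ and of $M_k$ for $k\ge 2$ is obtained from $\eta$ by successive inversions across the spheres $N=1$ and $N=R$; starting from $N(\eta)\in(R,1)$ these images have gauge in $\{N>1\}\cup\{N<R\}$, hence lie outside $\overline D$. Since $K$, $K_R$ and composition with $i$ send harmonic functions to harmonic functions, every such term is $L_0$-harmonic throughout $D$. Because the two series converge uniformly on compact subsets of $D$ (already proved), the sum of all terms other than $M_1$ is again $L_0$-harmonic in $D$ (a locally uniform limit of $L_0$-harmonic functions is $L_0$-harmonic, by hypoellipticity of $L_0$), so $L_0G=L_0M_1$ in $D$. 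Finally $L_0\bar g_\eta=\overline{L_0 g_\eta}=\overline{\delta_\eta}$ because $L_0$ is rotation invariant, and tested against a circular $\phi$ this gives $\langle\overline{\delta_\eta},\phi\rangle=\overline\phi(\eta)=\phi(\eta)$; thus $L_0G=\delta_\eta$ in the sense of circular data, which is the assertion. Smoothness of $G$ on $D\setminus\{\eta\}$ follows from the uniform convergence of the series and of its term-by-term derivatives.

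For part (ii) I would play the two structural identities against the boundary behaviour of the Kelvin transforms. On $N(\xi)=1$ the circular Kelvin identity $K(f)(\xi^{-1})=f(\xi)$ reads $\big(K(f)\circ i\big)(\xi)=f(\xi)$ (recall $i\xi=\xi^{-1}$), so $H_k=M_k$ and $V_k=U_k$ there; hence every bracket $M_k-H_k$ and $V_k-U_k$ vanishes individually and $G\to 0$ as $N(\xi)\to 1$. On $N(\xi)=R$ the corresponding identity for $K_R$ gives $\big(K_R(f)\circ i\big)(\xi)=f(\xi)$, so now $M_{k+1}=H_k$, $U_{k+1}=V_k$ and $U_1=M_1$. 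Substituting these into the absolutely convergent series and telescoping,
\ben
G=\sum_{k\ge1}(M_k-M_{k+1})+\sum_{k\ge1}(U_{k+1}-U_k)=M_1-U_1=0
\een
on $N(\xi)=R$ (using $M_k,U_k\to0$), so $G\to0$ as $N(\xi)\to R$. The interchanges are legitimate because all poles stay a fixed distance from $\partial D$, so the terms are continuous up to the boundary.

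I expect the main obstacle to be the rigorous justification in (i) that $L_0$ may be taken inside the infinite sum and that the sole surviving singular contribution is $\delta_\eta$: this requires the pole-location bookkeeping above to guarantee that every term other than $M_1$ is genuinely harmonic on all of $D$, the regularity fact that a locally uniform limit of $L_0$-harmonic functions is $L_0$-harmonic, and the correct interpretation of $L_0\bar g_\eta$ as $\delta_\eta$ on circular test functions.
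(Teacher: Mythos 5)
Your proposal is correct and follows essentially the same route as the paper: term-by-term application of $L_0$ with $M_1=\bar g_\eta$ as the only singular term for (i), pairwise cancellation $M_k=H_k$, $V_k=U_k$ on $N(\xi)=1$, and the regrouping $(M_1-U_1)+(M_2-H_1)+\cdots+(V_1-U_2)+\cdots$ (your telescoping) on $N(\xi)=R$. Your structural identities $H_k=K(M_k)\circ i$, $M_{k+1}=K_R(H_k)\circ i$, etc.\ simply make explicit the ``properties of $K_R$'' that the paper invokes without writing them down, which is a welcome tightening but not a different argument.
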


\begin{proof}
First note that $H_k(\eta,\xi)$, $M_k(\eta,\xi)$, $k>1$, $U_k(\eta,\xi)$ and $V_k(\eta,\xi)$ are all harmonic functions on $D$ (this follows from definition of these functions and properties of the Kelvin transforms $K$ and $K_R$). Since the series are absolutely and uniformly convergent on compact sets so Laplacian can be applied to series term by term
\ben
\sum_{k=1}^\infty{L_0[M_k(\eta,\xi)-H_k(\eta,\xi)]}+\sum_{k=1}^\infty{L_0[V_k(\eta,\xi)-U_k(\eta,\xi)]}&=&L_0 M_1(\eta,\xi)\\
&=&L_0\bar{g_\eta}(\xi).
\een Thus,
$$L_0G(\eta,\xi)=L_0\bar{g_\eta}(\xi)=\delta_\eta.$$ 
It can be easily seen that as $N(\xi)\rightarrow 1$, $M_k(\eta,\xi)\rightarrow H_k(\eta,\xi)$ and $V_k(\eta,\xi)\rightarrow U_k(\eta,\xi)$. Therefore,$$\lim_{N(\xi)\rightarrow 1}\left(\sum_{k=1}^\infty{[M_k(\eta,\xi)-H_k(\eta,\xi)]}+\sum_{k=1}^\infty{[V_k(\eta,\xi)-U_k(\eta,\xi)]}\right)=0.$$ And,\\
\ben
&&\lim_{N(\xi)\rightarrow R}\left(\sum_{k=1}^\infty{[M_k(\eta,\xi)-H_k(\eta,\xi)]}+\sum_{k=1}^\infty{[V_k(\eta,\xi)-U_k(\eta,\xi)]}\right)\\
&&=\lim_{N(\xi)\rightarrow R}\left((M_1(\eta,\xi)-H_1(\eta,\xi))+(M_2(\eta,\xi)-H_2(\eta,\xi))\right.\\
&&\left.+(M_3(\eta,\xi)-H_3(\eta,\xi))+\ldots+(V_1(\eta,\xi)-U_1(\eta,\xi))\right.\\
&&\left.+(V_2(\eta,\xi)-U_2(\eta,\xi))+(V_3(\eta,\xi)-U_3(\eta,\xi))+\ldots\right)\\
&&=\lim_{N(\xi)\rightarrow R}\left((M_1(\eta,\xi)-U_1(\eta,\xi))+(M_2(\eta,\xi)-H_1(\eta,\xi))\right.\\
&&\left.+(M_3(\eta,\xi)-H_2(\eta,\xi))+\ldots+(V_1(\eta,\xi)-U_2(\eta,\xi))\right.\\
&&\left.+(V_2(\eta,\xi)-U_3(\eta,\xi))+(V_3(\eta,\xi)-U_4(\eta,\xi))+\ldots\right)\\
&&=\lim_{N(\xi)\rightarrow R}(M_1(\eta,\xi)-U_1(\eta,\xi))+\lim_{N(\xi)\rightarrow R}(M_2(\eta,\xi)-H_1(\eta,\xi))\\
&&+\lim_{N(\xi)\rightarrow R}(M_3(\eta,\xi)-H_2(\eta,\xi))+\ldots+\lim_{N(\xi)\rightarrow R}(V_1(\eta,\xi)-U_2(\eta,\xi))\\
&&+\lim_{N(\xi)\rightarrow R}(V_2(\eta,\xi)-U_3(\eta,\xi))+\lim_{N(\xi)\rightarrow R}(V_3(\eta,\xi)-U_4(\eta,\xi))+\ldots.
\een
We have, $M_1(\eta,\xi)=\bar{g_\eta}(\xi)$ and $U_1(\eta,\xi)=K_R(\bar{g_\eta})(-\xi)$\\Therefore, by $(2)$, $M_1(\eta,\xi)-U_1(\eta,\xi)=0$. Similarly, by using properties of $K_R$ on each and every term, all terms of this series are equal to zero.\\Hence, the function $G(\eta,\xi)$ given in $(5)$ is a smooth function and is a Green's function for $D$ when applied to circular functions.
\end{proof} The Poisson kernel is the normal derivative of Green's function and, from \cite{kor3}, is given by
$$P(\eta,\xi)=-\frac{1}{4}\frac{\partial}{\partial n_0} G(\eta,\xi),\;\xi\in\partial D,$$ where\\
$\frac{\partial}{\partial n_0}=\begin{cases}
\frac{1}{|z|}(\bar{A}E+A\bar{E})&\text{at$(\partial D)_1$ i.e, at the boundary of $\{\xi\in\H_n:N(\xi)=1\}$}\\
\frac{-1}{R^2|z|}(\bar{A}E+A\bar{E})&\text{at $(\partial D)_2$ i.e, at the boundary of $\{\xi\in\H_n:N(\xi)=R\}$},
\end{cases}$\\ 
$A=|z|^2-it$ and $E=\sum{z_j Z_j}.$ \\ An easy calculation using properties of the Hypergeometric function shows that $P(\eta,\xi)$ at $(\partial D)_1$ is given by
\ben
P(\eta,\xi)&=&\sum_{i=1}^4\sum_{k=1}^\infty{R^{2kn}|z|\frac{n}{2}(v_{k;i})^{-\frac{n}{2}-1}F\left(\frac{n}{2}+1,\frac{n}{2};n;\frac{u_{k,i}}{v_{k,i}}\right)}\left[R^{-2n}(N(\xi))^{-2n}\left(\frac{2|z|^2+t^2}{(|z|^4+t^2)^2}\right.\right.\\
&&\times\left.\left.(1+2R^{2(2k-2)}(|z|^2|\varsigma|^2-t\tau))-\frac{R^{2(2k-2)}}{|z|^4+t^2}(2|\varsigma|^2-t\tau)\right)\delta_{1i}+R^{-2n}((|z|^2+t^2)\right.\\
&&\left.+R^{2(2k-2)}(2|\varsigma|^2-t\tau))\delta_{2i}+R^{4k}(N(\xi))^{-2n}\left(\frac{2|z|^2+t^2}{(|z|^4+t^2)^2}(R^{4k}+2(|z|^2|\varsigma|^2-t\tau))\right.\right.\\
&&\left.\left.-\frac{1}{|z|^4+t^2}(2|\varsigma|^2-t\tau)\right)\delta_{3i}+R^{4k}(R^{4k}(|z|^2+t^2)+(2|\varsigma|^2-t\tau))\delta_{4i}\right],
\een where $\delta_{ai}$ denotes Dirac function of $\{i\}$. The Poisson kernel $P(\eta,\xi)$ at $(\partial D)_2$ is given by
\ben
P(\eta,\xi)&=&\sum_{i=1}^4\sum_{k=1}^\infty{-R^{(2kn-2)}|z|\frac{n}{2}(v_{k;i})^{-\frac{n}{2}-1}F\left(\frac{n}{2}+1,\frac{n}{2};n;\frac{u_{k,i}}{v_{k,i}}\right)}\left[R^{-2n}(N(\xi))^{-2n}\left(\frac{2|z|^2+t^2}{(|z|^4+t^2)^2}\right.\right.\\
&&\times\left.\left.(1+2R^{2(2k-2)}(|z|^2|\varsigma|^2-t\tau))-\frac{R^{2(2k-2)}}{|z|^4+t^2}(2|\varsigma|^2-t\tau)\right)\delta_{1i}+R^{-2n}((|z|^2+t^2)\right.\\
&&\left.+R^{2(2k-2)}(2|\varsigma|^2-t\tau))\delta_{2i}+R^{4k}(N(\xi))^{-2n}\left(\frac{2|z|^2+t^2}{(|z|^4+t^2)^2}(R^{4k}+2(|z|^2|\varsigma|^2-t\tau))\right.\right.\\
&&\left.\left.-\frac{1}{|z|^4+t^2}(2|\varsigma|^2-t\tau)\right)\delta_{3i}+R^{4k}(R^{4k}(|z|^2+t^2)+(2|\varsigma|^2-t\tau))\delta_{4i}\right].
\een
\begin{thm}
The Green's function and Poisson kernel which we have obtained above solves the Dirichlet boundary value problem for $D$ and the solution for BVP
\ben
L_0 u&=&f \;\text{in}\; D,\\
u&=&h \;\text{on}\; \partial D
\een is given by
$$u(\xi)=\int_D{G(\eta,\xi)f(\xi)dv(\xi)}+\int_{(\partial D)_1}{P(\eta,\xi)h(\xi)d\sigma(\xi)}+\int_{(\partial D)_2}{P(\eta,\xi)h(\xi)d\sigma(\xi)}$$ where $f$ and $h$ are circular functions.
\end{thm}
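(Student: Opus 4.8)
The plan is to prove the representation formula by superposition, writing $u=v+w$ where
\[
v(\xi)=\int_D G(\eta,\xi)f(\eta)\,dv(\eta),\qquad w(\xi)=\int_{(\partial D)_1}P(\eta,\xi)h(\eta)\,d\sigma(\eta)+\int_{(\partial D)_2}P(\eta,\xi)h(\eta)\,d\sigma(\eta).
\]
Here $f$ and $h$ are circular, so that Theorem 2.1 and the boundary formula for $P$ apply. I would show separately that $v$ solves $L_0v=f$ in $D$ with $v=0$ on $\partial D$ (\textbf{Part A}), and that $w$ solves $L_0w=0$ in $D$ with $w=h$ on $\partial D$ (\textbf{Part B}); adding the two gives $L_0u=f$ in $D$ and $u=h$ on $\partial D$, which is the assertion.

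For Part A, I first observe that on $\bar D\times\bar D$ the only singularity of $G(\eta,\xi)$ is along the diagonal $\eta=\xi$, where it agrees with $\bar{g_\eta}$; since $g_e$ is locally integrable on $\H_n$, the volume potential $v$ is well defined, and the smoothness of $G$ off the diagonal permits differentiation under the integral sign on compact subsets of $D$. Applying $L_0$ in the $\xi$ variable and invoking Theorem 2.1(i), $L_0G(\eta,\xi)=\delta_\eta$, gives $L_0v(\xi)=\int_D\delta_\eta(\xi)f(\eta)\,dv(\eta)=f(\xi)$. For the boundary condition I would use Theorem 2.1(ii): the limits of $G(\eta,\xi)$ vanish as $N(\xi)\to1$ and as $N(\xi)\to R$, and the uniform convergence of the series $(5)$ on compact neighbourhoods lets me pass the limit under the integral, so $v=0$ on both components of $\partial D$.

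For Part B, harmonicity is immediate: for fixed $\eta\in\partial D$ the pole of $G$ sits on the boundary of $D$, so $G(\eta,\cdot)$ is harmonic throughout $D$, and hence so is its normal derivative $P(\eta,\cdot)=-\frac{1}{4}\frac{\partial}{\partial n_0}G(\eta,\cdot)$; differentiating $w$ under the integral then yields $L_0w=0$ in $D$. The substantive point is that $w$ attains the data $h$ on $\partial D$. The plan is to show that on each boundary component the family $\{P(\eta,\xi)\}$ behaves as an approximate identity as $\xi\to\xi_0\in\partial D$: that $P\ge0$, that $\int_{\partial D}P(\eta,\xi)\,d\sigma(\eta)=1$ for every $\xi\in D$ (which can be verified directly from the explicit formula for $P$, or derived from Green's identity applied to the constant function), and that the mass of $P(\cdot,\xi)$ concentrates at $\eta=\xi_0$ as $\xi\to\xi_0$. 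These three facts would be extracted from the explicit hypergeometric expression for $P$ displayed just before the theorem, and the standard approximate-identity argument then gives $w(\xi)\to h(\xi_0)$ for circular $h$.

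I expect the main obstacle to be precisely the boundary attainment in Part B, that is, establishing the concentration estimate for $P$ from its hypergeometric formula; positivity and the normalization $\int_{\partial D}P\,d\sigma=1$ are comparatively routine, but isolating the concentration as $\xi\to\partial D$ requires careful use of the explicit kernel, and one must keep track throughout of the restriction to circular functions, since $G$ (and hence $P$) reproduces only circular data. An alternative, one-step route would be Green's second identity for $L_0$ on $D$, namely $\int_D\left(u\,L_0G-G\,L_0u\right)dv$ expressed as boundary integrals in $\frac{\partial}{\partial n_0}$, which formally yields the whole formula at once; but it requires justifying integration by parts up to the boundary for the degenerate operator $L_0$ and controlling the characteristic points of $\partial D$, which is itself delicate, so I would prefer the superposition argument above.
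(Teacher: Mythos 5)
The first thing to note is that the paper offers no proof of this theorem at all: Theorem 2.2 is stated immediately after the displayed formulas for the Poisson kernel and the text passes directly to Section 3, so the representation formula is being presented as a formal consequence of Theorem 2.1 and the definition $P=-\frac{1}{4}\frac{\partial}{\partial n_0}G$. Your superposition decomposition $u=v+w$ is therefore neither ``the paper's route'' nor a deviation from it; it is the standard argument the authors implicitly have in mind, and your outline already supplies more detail than the paper does. You also silently repair a defect in the statement as printed: the formula integrates $G(\eta,\xi)f(\xi)\,dv(\xi)$ over $D$ while calling the result $u(\xi)$, so the integration variable and the free variable coincide; your version, with $\eta$ as the integration variable and $\xi$ free, is the only reading consistent with Theorem 2.1(i).

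That said, your proposal is a plan rather than a proof, and the gap you yourself flag is the entire content of the theorem. Part A is acceptable modulo the usual local integrability of $g_e$ near the diagonal. In Part B, none of the three approximate-identity properties of $P$ is routine in this setting: positivity is not visible from the explicit double series of hypergeometric terms (and you call it ``comparatively routine'' without indicating how you would extract it); the normalization $\int_{\partial D}P\,d\sigma=1$ via Green's identity applied to the constant function requires precisely the integration by parts up to the boundary for the degenerate operator $L_0$, including control at the characteristic points of the two gauge spheres, that you say you want to avoid; and the concentration estimate as $\xi\to\xi_0$ is left entirely untouched. Moreover, since $G$ reproduces only circular data, the assertion that $w$ attains $h$ on $\partial D$ must be interpreted after averaging over the circle action, and you should verify that the approximate-identity argument commutes with that averaging. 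As written, your proposal correctly locates where the work lies but does not carry it out; to constitute a proof it would need either genuine kernel estimates for $P$ on both boundary components or a fully justified Green's-second-identity computation on the annulus.
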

\section{GREEN'S FUNCTION FOR INFINITE STRIP}
In this section, $I$ will denote the infinite strip $\{\xi=[z',t']\in\H_n:0<t'<1\}$. Denote, by $H(t)$ the function of $(\eta,\xi),$ 
$$H(t)=a_0|C_{-t}|^{-n}F\left(\frac{n}{2},\frac{n}{2};n;\frac{|P|^2}{|C_{-t}|^2}\right),$$ where $C_{\pm t}$ and $P$ are defined as follows
\ben
C_{\pm t}&=&|z|^2+|z'|^2+i(t'\pm t)\\
P&=&2z.\bar{z}',
\een for $\xi=[z',t'] \in\H_n$ , $\eta=[z,t]\in \H_n$. A differential operator, whenever applied to function $H(t)$ will be with respect to the variable $\xi$.\\ Consider the following series $$\sum_{m=0}^\infty{H(2m-t)},\sum_{m=0}^\infty{H(2m+t)},\sum_{m=1}^\infty{H(-2m-t)},\sum_{m=1}^\infty{H(-2m+t)}.$$ We first show that the four series are uniformly convergent on compact neighbourhoods of $\xi$. For this firstly we show that the sequences of functions $$F\left(\frac{n}{2},\frac{n}{2};n;\frac{|P|^2}{|C_{2m-t}|^2}\right), F\left(\frac{n}{2},\frac{n}{2};n;\frac{|P|^2}{|C_{2m+t}|^2}\right),F\left(\frac{n}{2},\frac{n}{2};n;\frac{|P|^2}{|C_{-2m-t}|^2}\right), F\left(\frac{n}{2},\frac{n}{2};n;\frac{|P|^2}{|C_{-2m+t}|^2}\right)$$ are uniformly bounded.\\Firstly, consider the argument $u_{m,1}$ of the hypergeometric function $F\left(\frac{n}{2},\frac{n}{2};n;\frac{|P|^2}{|C_{2m-t}|^2}\right)$ i.e,
\ben
|u_{m,1}|=\left|\frac{|P|^2}{|C_{2m-t}|^2}\right|&=&\frac{4|z|^2|z'|^2}{||z|^2+|z'|^2+i(t'+2m-t)|^2}\\
&=&\frac{4|z|^2|z'|^2}{|z|^4+|z'|^4+2|z|^2|z'|^2+(t'+2m-t)^2}\\
&\leq&\frac{4|z|^2|z'|^2}{|z|^4+|z'|^4+2|z|^2|z'|^2}.\\
\text{So},\;1-\left|\frac{|P|^2}{|C_{2m-t}|^2}\right|&\geq&1-\frac{4|z|^2|z'|^2}{|z|^4+|z'|^4+2|z|^2|z'|^2}\\
&=&\frac{|z|^4+|z'|^4-2|z|^2|z'|^2}{|z|^4+|z'|^4+2|z|^2|z'|^2}\\
&=&\frac{(|z|^2-|z'|^2)^2}{|z|^4+|z'|^4+2|z|^2|z'|^2}.
\een We can choose a suitable compact neighbourhood of $\xi$ such that $|z|^2-|z'|^2>\epsilon$ for some $\epsilon>0$. So, we have $$1-\frac{|P|^2}{|C_{2m-t}|^2} \geq \frac{\epsilon^2}{|z|^4+|z'|^4+2|z|^2|z'|^2}>\epsilon_1,$$ for some $\epsilon_1>0$.\\ $\Rightarrow \frac{|P|^2}{|C_{2m-t}|^2}<1-\epsilon_1$ i.e, argument $u_{m,1}$ of hypergeometric function $F\left(\frac{n}{2},\frac{n}{2};n;u_{m,1}\right)$ is bounded away from 1. Therefore, $\left\{F\left(\frac{n}{2},\frac{n}{2};n;u_{m,1}\right)\right\}_{m=1}^\infty$ is uniformly bounded, say $$\left|F\left(\frac{n}{2},\frac{n}{2};n;u_{m,1}\right)\right|< S.$$ Similarly, it can be shown that
$\left\{F\left(\frac{n}{2},\frac{n}{2};n;|u_{m,i}|\right)\right\}_{m=0}^\infty,\;i=2,3,4,$ are uniformly bounded, where\\
$u_{m,2}=\frac{|P|^2}{|C_{2m+t}|^2},\;u_{m,3}=\frac{|P|^2}{|C_{-2m-t}|^2}\;\text{and}\;u_{m,4}=\frac{|P|^2}{|C_{-2m+t}|^2}.$\\ 
Next, consider the term $|C_{2m-t}|^{-n}$, $n>1$.
\ben
|C_{2m-t}|^{-n}&=&||z|^2+|z'|^2+i(t'+2m-t)|^{-n}\\
&=&(m)^{-n}\left|\frac{|z|^2+|z'|^2}{m}+i\left(\frac{t'-t}{m}+2\right)\right|^{-n}\\
&\rightarrow&0\; as\; m\rightarrow\infty,
\een on any compact neighbourhood of $\xi=[z,t]$, for fixed $\eta=[z',t']$.\\Consider
\ben
|H(2m-t)|&=&\left|F\left(\frac{n}{2},\frac{n}{2};n;\frac{|Q|^2}{|C_{2m-t}|^2}\right).a_0|C_{2m-t}|^{-n}\right|\\
&\leq&S.|a_0| |C_{2m-t}|^{-n}\\
&\rightarrow&0 \;as\; m\rightarrow\infty, 
\een on compact neighbourhood of $\xi$ for fixed $\eta$. Therefore, $\sum_{m=1}^\infty{H(2m-t)}$ is uniformly convergent on compact neighbourhoods of $\xi$.\\Similarly, $\sum_{m=0}^\infty{H(2m+t)},\sum_{m=0}^\infty{H(-2m-t)},\sum_{m=1}^\infty{H(-2m+t)}$ are uniformly convergent on compact neighbourhoods of $\xi$.\\Define
\begin{eqnarray}
G'(\eta,\xi)=\sum_{m=0}^\infty{H(2m+t)}-\sum_{m=0}^\infty{H(-2m-t)}+\sum_{m=1}^\infty{H(-2m+t)}-\sum_{m=1}^\infty{H(2m-t)}.
\end{eqnarray}For each $\eta$, $G'(\eta,\xi)$ is a well defined function. Now, we claim that $G'(\eta,\xi)$ works as a Green function for domain $I$ when applied to circular functions.
\begin{thm}
The function $G'(\eta,\xi)$ is a smooth function on $I=\{\xi=[z',t']\in\H_n:0<t'<1\}$ and satisfies the following.\\(i) $L_0G'(\eta,\xi)=\delta_\eta$.\\(ii) Limits of the function $G'(\eta,\xi)$ vanishes at the boundaries of infinite strip i.e, at $t'=0$ and $t'=1$.
\end{thm}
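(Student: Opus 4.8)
The plan is to mirror the annular-domain argument, viewing the four reflected series as images of the pole across the two boundary hyperplanes $\{t'=0\}$ and $\{t'=1\}$. Assertion (i) is proved by isolating the single term carrying the singularity, and assertion (ii) by exploiting two exact reflection identities for $H$ on those hyperplanes.

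First I would record that $H(t)=\bar g_\eta(\xi)$ is the averaged fundamental solution with pole at $\eta=[z,t]$, so that $L_0H(t)=\delta_\eta$ for circular data, while every translate $H(s)$ is, by the rotation invariance of $L_0$ (averaging over $z\mapsto e^{i\theta}z$ commutes with $L_0$), harmonic off the hyperplane $\{t'=s\}$. Since $0<t<1$, a direct check shows that among all summands only the $m=0$ term $H(t)$ of $\sum_{m=0}^\infty H(2m+t)$ has its singular hyperplane inside $I$: the remaining exponents satisfy $2m+t\ge 2+t>1$, $-2m-t\le -t<0$, $-2m+t\le -2+t<0$, and $2m-t\ge 2-t>1$. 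Thus every other summand is harmonic on $I$, and by the uniform convergence on compact subsets established before the statement, $L_0$ may be applied termwise, giving $L_0G'(\eta,\xi)=L_0H(t)=\delta_\eta$, which is (i). The same convergence, together with the fact that no singular hyperplane meets $\{t'=0\}$ or $\{t'=1\}$, shows $G'$ extends smoothly to $\partial I$, so the boundary limits in (ii) may be computed termwise.

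For (ii) I would first establish the two reflection identities that drive the method of images. Reading off the real and imaginary parts of $C_{\pm s}=|z|^2+|z'|^2+i(t'\pm s)$ and noting that $|P|$ is independent of the $t$-shift, one gets $|C_{-s}|=|C_{s}|$ on $\{t'=0\}$, hence $H(s)=H(-s)$ there, and $|C_{-s}|=|C_{-(2-s)}|$ on $\{t'=1\}$, hence $H(s)=H(2-s)$ there. Granting these, the limit at $t'=0$ is immediate: $H(-2m-t)=H(2m+t)$ collapses the first two sums against each other and $H(-2m+t)=H(2m-t)$ collapses the last two, so $\lim_{t'\to 0}G'=0$. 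At $t'=1$ the bookkeeping is slightly more involved: writing $H(2m+t)=H(-2(m-1)-t)$ turns the first sum into $H(2-t)+\sum_{j\ge 0}H(-2j-t)$, which cancels the second sum except for the leftover $H(2-t)$; symmetrically $H(-2m+t)=H(2(m+1)-t)$ turns the third sum into $\sum_{j\ge 2}H(2j-t)$, which cancels the fourth sum except for the leftover $-H(2-t)$. Adding, the two leftovers cancel and $\lim_{t'\to 1}G'=0$.

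The main obstacle I anticipate is the rearrangement in the $t'=1$ computation: the cancellation is not a plain telescoping but a reindexing of several shifted families, so I must be sure the regrouping is legitimate. This is precisely where the absolute and uniform convergence proved before the theorem is used — it guarantees that the four series may be regrouped and reindexed freely and that the termwise boundary limits agree with the limit of the sum, so the formal cancellations above are rigorous. This step is the analogue of the reflection-and-reindexing used at $N(\xi)=R$ in the annular case.
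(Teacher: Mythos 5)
Your proof is correct and follows essentially the same method-of-images argument as the paper: termwise application of $L_0$ after isolating the single singular summand $H(t)$, and boundary cancellation via the reflection identities $H(s)=H(-s)$ at $t'=0$ and $H(s)=H(2-s)$ at $t'=1$, justified by the previously established uniform convergence. In fact your bookkeeping at $t'=1$ is the correct one: the paper's stated pairings there (e.g.\ $H(t)\to H(-t)$ and $H(2m+t)\to H(-2m-t)$ as $t'\to 1$) are the $t'=0$ identities and do not actually hold on $\{t'=1\}$, whereas your reindexing $H(2m+t)=H(-2(m-1)-t)$ and $H(-2m+t)=H(2(m+1)-t)$, with the two leftover terms $\pm H(2-t)$ cancelling, is what genuinely makes the limit vanish.
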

\begin{proof}
First note that for $m>0$, $H(2m+t),H(2m-t),H(-2m-t)\;\text{and}\;H(-2m+t)$ are all harmonic functions on $I$. Laplacian can be applied to the series so term by term and $$L_0G'(\eta,\xi)=L_0H(t)=\delta_\eta.$$ It can be easiy seen that as $t'\rightarrow 0,\;H(2m+t)\rightarrow H(-2m-t)$ and $H(-2m+t)\rightarrow H(2m-t)$ as, $|C_{2m+t}|^2 \rightarrow |C_{-2m-t}|^2$ and $|C_{-2m+t}|^2 \rightarrow |C_{2m-t}|^2$. Therefore,
$$\lim_{t'\rightarrow 0}\left(\sum_{m=0}^\infty{H(2m+t)}-\sum_{m=0}^\infty{H(-2m-t)}+\sum_{m=1}^\infty{H(-2m+t)}-\sum_{m=1}^\infty{H(2m-t)}\right)=0.$$ Moreover,
$$\lim_{t'\rightarrow 1}\left(\sum_{m=0}^\infty{H(2m+t)}-\sum_{m=0}^\infty{H(-2m-t)}+\sum_{m=1}^\infty{H(-2m+t)}-\sum_{m=1}^\infty{H(2m-t)}\right)$$
$$=\lim_{t'\rightarrow 1}(H(t)+H(2+t)+\ldots-H(-t)-H(-2-t)-\ldots+H(-2+t)+H(-4+t)+\ldots-H(2-t)-H(4-t)).$$ From the expression of $H(t)$, it can be easily seen that as $t'\rightarrow 1, H(t)\rightarrow H(-t)$ and for $m\geq 1$, $H(2m+t) \rightarrow H(-2m-t)$ and for $m\geq 2$, $H(2m-t) \rightarrow H(-2m+t).$\\Therefore, the last term is zero.\\
Hence,
$G'(\eta,\xi)$ given by $(6)$ is smooth Green's function for $I$ when applied to circular functions.
\end{proof}
The Dirichlet BVP similar to that in Theorem $2.2$ on I can be solved by obtaining a Poisson kernel on $I$. 
\section*{acknowledgements}
The authors are thankful to P. K Ratnakumar, Harish Chandra Research Institute, Allahabad, India, for his valuable discussion. The first author was supported by the Junior Research Fellowship of Council of Scientific and Industrial Research, India (Grant no. 09/045(1152)/2012-EMR-I).


\begin{thebibliography}{5}
\bibitem{BV8} H. Begehr and C. J. Venegas, \textit{Iterated Neumann problem for the higher order Poisson equation}, Math. Nachr. 279(1-2) (2006), pp. 38-57.
\bibitem{HT9} H. Begehr and T. Vaitekhovich, \textit{Harmonic boundary value problems in half disc and half ring}, Funct. Approx. Comment. Math. 40(Part 2)(2009), pp. 251-282.
\bibitem{CH10} R. Courant and D. Hilbert, \textit{Methods of Mathematical Physics}, Vol. 1, Wiley-VCH, Weinheim, 2008.
\bibitem{F2} G. B. Folland, \textit{A Fundamental Solution for a subelliptic operator}, Bull. Am. Math. Soc.79(1973), pp. 373-376.
\bibitem{FM1} G. B. Folland and E. M. Stein, \textit{Estimates for the $\bar{\partial_b}$ complex and analysis on the Heisenberg group}, Commun. Pure Appl. Math. 27(1974), pp. 429-522.
\bibitem{GK5} P. C. Greiner and T. H. Koornwinder, \textit{Variations on the Heisenberg harmonics}, Report ZW 186/83 Mathematisch Centrum, Amsterdam, 1983.
\bibitem{kor4} A. Kor\'{a}nyi, \textit{Kelvin transforms and harmonic polynomials on the Heisenberg group}, J. Funct. Anal. 49(1982), pp. 177-185.
\bibitem{kor3} A. Kor\'{a}nyi and H. M. Riemann, \textit{Horizontal normal vectors and conformal capacity of spehrical rings in the Heisenberg group}, Bull. Sci. Math. Ser. 2 111(1987), pp. 3-21.
\bibitem{kor6}  A. Kor\'{a}nyi, \textit{Poisson formulas for circular functions and some groups of type H}, Sci. China Ser. A: Math. 49(2006), pp. 1683-1695.
\bibitem{KM7} A. Kumar and M. M. Mishra, \textit{Green functions and related boundary value problems on the Heisenberg group}, Complex Var. Elliptic Eqns 58(2013), pp. 547-556.
\end{thebibliography}
\end{document}